\theoremstyle{plain}
\newtheorem*{theorem*}{Theorem}
\newtheorem{theorem}{Theorem}[section]
\newtheorem{lemma}[theorem]{Lemma}
\newtheorem{claim}[theorem]{Claim}
\newtheorem{proposition}[theorem]{Proposition}
\newtheorem*{claim*}{Claim}
\newtheorem{corollary}[theorem]{Corollary}
\newtheorem{problem}[theorem]{Problem}
\newtheorem{question}[theorem]{Question}
\theoremstyle{remark}
\DeclareMathOperator{\Aut}{Aut}
\DeclareMathOperator\Bi{Bin}
\newcommand{\A}{\mathcal{A}}
\newcommand{\B}{\mathcal{B}}
\newcommand{\F}{\mathcal{F}}
\newcommand{\PN}{\mathcal{P}_n}
\newcommand{\N}{\mathbb{N}}
\newcommand{\E}{\mathbb{E}}
\newcommand{\NK}{[n]^{(k)}}
\newcommand{\Prob}{\mathbb{P}}
\let\emptyset\varnothing
\let\eps\varepsilon
\let\originalleft\left
\let\originalright\right
\renewcommand{\left}{\mathopen{}\mathclose\bgroup\originalleft}
\renewcommand{\right}{\aftergroup\egroup\originalright}
\def\imod#1{\allowbreak\mkern10mu({\operator@font mod}\,\,#1)}
\begin{document}

\title{On symmetric intersecting families}

\author{David Ellis}
\address{School of Mathematics, University of Bristol, Fry Building, Woodland Road, Bristol BS8\thinspace1UG, UK}
\email{david.ellis@bristol.ac.uk}

\author{Gil Kalai}
\address{Einstein Institute of Mathematics, The Hebrew University of Jerusalem, Edmond
J. Safra Campus, Jerusalem 91904, Israel}
\email{kalai@math.huji.ac.il}

\author{Bhargav Narayanan}
\address{Department of Mathematics, Rutgers University, Piscataway NJ 08854, USA}
\email{narayanan@math.rutgers.edu}

\date{21st July 2018}

\subjclass[2010]{Primary 05D05; Secondary 05E18}

\begin{abstract}
We make some progress on a question of Babai from the 1970s, namely:  for $n, k\in \N$ with $k \le n/2$, what is the largest possible cardinality $s(n,k)$ of an intersecting family of $k$-element subsets of $\{1,2,\ldots,n\}$ admitting a transitive group of automorphisms? We give upper and lower bounds for $s(n,k)$, and show in particular that $s(n,k) = o (\binom{n-1}{k-1})$ as $n \to \infty$ if and only if $k = n/2 - \omega(n)(n/\log n)$ for some function $\omega(\cdot)$ that increases without bound, thereby determining the threshold at which `symmetric' intersecting families are negligibly small compared to the maximum-sized intersecting families. We also exhibit connections to some basic questions in group theory and additive number theory, and pose a number of problems.
\end{abstract}
\maketitle

\section{Introduction}
A family of sets is said to be \emph{intersecting} if any two sets in the family have nonempty intersection, and \emph{uniform} if all the sets in the family have the same size. In this paper, we study uniform intersecting families. The most well-known result about such families is the Erd\H{o}s--Ko--Rado theorem~\citep{EKR}.
\begin{theorem}
\label{thm:ekr}
Let $n,k \in \mathbb{N}$ with $k \le n/2$. If $\A$ is an intersecting family of $k$-element subsets of $\{1,2,\dots,n\}$, then $|\A| \le \binom{n-1}{k-1}$. Furthermore, if $k < n/2$, then equality holds if and only if $\A$ is a star, meaning that $\A$ consists of all the $k$-element subsets of $\{1,2,\ldots,n\}$ that contain some fixed element $i \in \{1,2,\dots,n\}$.
\end{theorem}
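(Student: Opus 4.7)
The plan is to use Katona's cyclic method, which gives the upper bound via a clean double counting and reduces the problem to a lemma about arcs on a cycle.

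\emph{Step 1 (Arcs on a cycle).} Arrange the elements of $[n]$ on a circle, and call a $k$-subset an \emph{arc} if it consists of $k$ cyclically consecutive positions; there are $n$ such arcs. I would first prove that, assuming $k \le n/2$, any intersecting subfamily of these $n$ arcs has size at most $k$. Fix one arc $A_0 = \{0,1,\ldots,k-1\}$ in the subfamily, and write $A_i$ for the arc $\{i,i+1,\ldots,i+k-1\}$ (mod $n$). The only arcs meeting $A_0$ are the $A_i$ for $i \in \{-(k-1),\ldots,-1,1,\ldots,k-1\}$, a total of $2k-2$ candidates. These pair up as $\{A_i, A_{i-k}\}$ for $1\le i \le k-1$, and because $k \le n/2$ each such pair consists of two disjoint arcs; hence the subfamily contains at most one arc from each of the $k-1$ pairs, which together with $A_0$ gives at most $k$ arcs.

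\emph{Step 2 (Double counting).} Now double count pairs $(\sigma, A)$ with $\sigma \in S_n$ and $A \in \A$ such that $A$ appears as an arc when $[n]$ is arranged cyclically according to $\sigma$. For each of the $n!$ permutations, Step~1 bounds the number of such $A$ by $k$. Conversely, each $A \in \A$ appears as an arc in exactly $n \cdot k! \cdot (n-k)!$ permutations (choose one of $n$ arc positions, then order $A$ within those positions and $[n]\setminus A$ outside). Comparing the two counts gives
\[ |\A| \cdot n \cdot k! \cdot (n-k)! \;\le\; k \cdot n!, \]
which rearranges to $|\A| \le \binom{n-1}{k-1}$, as required.

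\emph{Step 3 (Uniqueness when $k < n/2$).} For the extremal case, I would first sharpen Step~1: when $k < n/2$, every intersecting family of exactly $k$ arcs on the $n$-cycle is the \emph{star} consisting of all $k$ arcs through some fixed point (this follows by examining when equality holds in the pairing argument). If $|\A| = \binom{n-1}{k-1}$, then the double count is tight, so in every cyclic arrangement the sets of $\A$ that appear as arcs form such a star. The remaining task --- showing that a single element $i \in [n]$ lies in every member of $\A$ --- is the main obstacle. My approach would be to apply Frankl's shifting operators $S_{ij}$ to pass to a left-compressed intersecting family $\A^*$ of the same cardinality, verify directly that a left-compressed intersecting family meeting the extremal bound must be the star through $1$, and then argue (using that the shifts are reversible step-by-step) that the only $\A$ whose successive shifts yield a star is itself a star. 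The hypothesis $k < n/2$ is used crucially: when $k = n/2$, each $k$-set and its complement are disjoint, which allows additional extremal families and breaks the final step.
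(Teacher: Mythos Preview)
The paper does not actually prove Theorem~\ref{thm:ekr}; it is quoted as the classical Erd\H{o}s--Ko--Rado theorem with a citation to~\citep{EKR} and is used only as background. So there is no ``paper's own proof'' to compare against.

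That said, your Steps~1 and~2 are the standard Katona cycle proof and are correct. In Step~3 there is a genuine gap: shifting operators $S_{ij}$ are \emph{not} reversible, so you cannot simply ``undo'' them to conclude that $\A$ was a star because $\A^*$ is. The usual way to finish via shifting is different: one shows that if $\A$ is extremal but not a star, then at some stage of the shifting process one has a family $\B$ that is not a star while $S_{ij}(\B)$ is the star through $j$; analysing what $S_{ij}$ does to such a $\B$ then yields a contradiction (using $k<n/2$). Alternatively, and more in the spirit of your Steps~1--2, one can avoid shifting altogether: equality in the double count forces that in \emph{every} cyclic ordering the arcs in $\A$ form a star, and a short direct argument (swap two adjacent elements in a well-chosen ordering) shows this is impossible unless $\A$ is globally a star. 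Either route works, but the ``reversibility'' justification as you wrote it does not.
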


Over the last fifty years, many variants of this theorem have been obtained. A common feature of many of these variants is that the extremal families are highly asymmetric; this is the case, for example, in the Erd\H{o}s--Ko--Rado theorem itself, in the Hilton--Milner theorem~\citep{HM}, and in Frankl's generalisation~\citep{FranklHM} of these results. It is therefore natural to ask what happens to the maximum possible size of a uniform intersecting family when one imposes some kind of symmetry requirement on the family.

In the 1970s, Babai posed the problem of determining the maximum possible size of a uniform intersecting family with transitive automorphism group; this is a very natural symmetry requirement to impose. In this paper, we make some progress on Babai's problem.

Let us first give our definitions in full, and fix some notation. For a positive integer $n \in \N$, we denote the set $\{1, 2,\dots, n\}$ by $[n]$. We write $S_n$ for the symmetric group on $[n]$ and $\PN$ for the power-set of $[n]$. For a permutation $\sigma \in S_n$ and a set $x \subset [n]$, we write $\sigma(x)$ for the image of $x$ under $\sigma$, and if $\A \subset \PN$, we write $\sigma(\A) = \{\sigma(x):x \in \A\}$. We define the \emph{automorphism group} of a family $\A \subset \PN$ by
\[\Aut(\A) = \{\sigma \in S_n:\sigma(\A) = \A\}.\]
We say that $\A \subset \PN$ is \emph{symmetric} if $\Aut(\A)$ is a transitive subgroup of $S_n$, i.e., if for all $i, j \in [n]$, there exists a permutation $\sigma \in \Aut(\A)$ such that $\sigma(i) = j$.

For a pair of integers $n,k \in \N$ with $k \le n$, let $\NK$ denote the family of all $k$-element subsets of $[n]$, and let
\[s(n,k) = \max\{|\A|:\A \subset \NK \text{ such that } \A \text{ is symmetric and intersecting}\}.\]
Of course, if $k > n/2$, then $[n]^{(k)}$ itself is a symmetric intersecting family, so $s(n,k) = \binom{n}{k}$; in studying $s(n,k)$, we may therefore restrict our attention to the case where $k \le n/2$.

With these definitions in place, we may state the aforementioned question of Babai more precisely.
\begin{problem}\label{bab}
For $n,k \in \N$ with $k \le n/2$, determine $s(n,k)$.
\end{problem}
We remark that, in the non-uniform setting (where one studies families of sets not all of the same size, i.e., subsets of $\PN$), several authors have obtained results on the maximum size of symmetric families that are intersecting (or satisfy some stronger intersection requirement); see for example the results of Frankl~\citep{frankl-1}, Cameron, Frankl and Kantor~\citep{frankl-2}, and the more recent results of the first and third authors~\citep{en}. Relatively little seems to be known in the uniform setting, however. 

As a first step towards Problem~\ref{bab}, we focus on determining when a symmetric uniform intersecting family must be \emph{significantly smaller} than the extremal families (of the same uniformity) in the Erd\H{o}s--Ko--Rado theorem. A more precise formulation of this question is as follows.
\begin{question}\label{p:main}
For which $k = k(n) \le n/2$ is $s(n,k) = o(\binom{n-1}{k-1})$ as $n \to \infty$?
\end{question}

Utilising a well-known sharp threshold result of Friedgut and the second author~\citep{FK}, we prove the following.
\begin{theorem}
\label{thm:main}
There exists a universal constant $c>0$ such that
\[s(n,k) \le 2\exp\left(-\frac{c(n-2k)\log n}{n} \right) \binom{n}{k}\]
for any $n, k \in \N$ with $k \le n/2$.
\end{theorem}

We also give a construction showing that Theorem~\ref{thm:main} is sharp up to the value of $c$ in the regime where $k/n$ is bounded away from zero. This construction, in conjunction with Theorem~\ref{thm:main}, provides a complete answer to Question~\ref{p:main}.

\begin{theorem}\label{prop:tight}
If $k = k(n) \le n/2$, then as $n \to \infty$, $s(n,k) = o(\binom{n-1}{k-1})$ if and only if \[k = \frac n2 - \omega(n)\left(\frac{n}{\log n}\right)\] for some function $\omega(\cdot)$ that increases without bound.
\end{theorem}

While Question~\ref{p:main} is the most basic question in the regime where the uniformity $k$ is large compared to the size $n$ of the ground set, the most basic question in the regime where $k$ is small compared to $n$ concerns the \emph{existence} of symmetric intersecting families. Note that if $s(n,k) >0$, then $s(n,l) >0$ for all $l > k$; indeed, if $\A \subset [n]^{(k)}$ is nonempty, symmetric and intersecting, then so is $\{y \in [n]^{(l)}: x \subset y \text{ for some }x \in \A\}$. With this in mind, for each $n \in \mathbb{N}$, we define
\[g(n) = \min\{k \in \mathbb{N}: s(n,k)>0\}.\]

It turns out that problem of determining the function $g(\cdot)$ is intimately connected to some longstanding open problems in group theory and additive number theory. It is not hard to show, as we shall see, that $g(n) \ge \sqrt{n}$ for all $n \in \N$. It is then natural to ask whether this bound is asymptotically tight, and this prompts us to raise the following question.
\begin{question}
\label{qn:threshold}
Is it true that $g(n) = (1+o(1))\sqrt{n}$ for all $n \in \mathbb{N}$?
\end{question}
While we are able to show that the estimate in Question~\ref{qn:threshold} holds along various (arithmetically special) sequences of positive integers, we are unfortunately unable to settle this question entirely. 

The remainder of this paper is organised as follows. We give the proof of Theorem~\ref{thm:main} in Section~\ref{s:proof}. In Section~\ref{s:cons}, we describe a combinatorial approach to constructing large symmetric intersecting families in the regime where $k$ is comparable to $n$, and deduce Theorem~\ref{prop:tight} as a consequence. In Section~\ref{s:smallk}, we turn to the regime where $k$ is comparable to $\sqrt{n}$, and describe various algebraic constructions of symmetric intersecting families in this regime. We finally conclude in Section~\ref{s:conc} with a discussion of some open problems and related work.

\section{Upper bounds}\label{s:proof}
We first describe briefly the notions and tools we will need for the proof of Theorem~\ref{thm:main}. In what follows, all logarithms are to the base $e$.

We begin with the following simple observation which may be found in~\citep{frankl-2}, for example; we include a proof for completeness. 

\begin{proposition}\label{rootn}
For all $n,k \in \N$ with $1 < k \le \sqrt{n}$, we have $s(n,k) = 0$.
\end{proposition}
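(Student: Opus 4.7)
The plan is to derive a contradiction from the assumption that $\A \subset [n]^{(k)}$ is a nonempty symmetric intersecting family, by combining a double-counting argument (enabled by the transitivity of $\Aut(\A)$) with the intersecting property. This reduces to a clean arithmetic inequality that forces $k > \sqrt{n}$.

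First I would use the symmetry hypothesis: for each $i\in[n]$, let $d_i$ denote the number of sets of $\A$ containing $i$. Given any $i,j\in[n]$, transitivity yields some $\sigma\in\Aut(\A)$ with $\sigma(i)=j$; since $\sigma$ permutes $\A$ and sends the sets through $i$ bijectively onto the sets through $j$, we get $d_i=d_j$. Call this common value $d$; because $\A$ is nonempty, $d\ge 1$. Double-counting incidences $\{(i,A):i\in A\in\A\}$ then gives
\[
nd \;=\; \sum_{i\in[n]} d_i \;=\; k|\A|, \qquad\text{so}\qquad |\A| = \frac{nd}{k}.
\]

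Next I would exploit the intersecting property: fix any $A\in\A$, and note that every other set in $\A$ must meet $A$, while each element of $A$ lies in exactly $d-1$ sets of $\A$ other than $A$ itself. By a union bound over the $k$ elements of $A$,
\[
|\A|-1 \;\le\; k(d-1).
\]
Substituting $|\A|=nd/k$ and rearranging yields $d(k^2-n)\ge k(k-1)$. Under the hypothesis $1<k\le\sqrt{n}$, the left-hand side is at most $0$ while the right-hand side is strictly positive, a contradiction. Hence no such $\A$ exists, i.e.\ $s(n,k)=0$.

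There is no real obstacle here: the whole argument rests on recognising that transitivity forces the degree sequence of $\A$ (viewed as a hypergraph) to be constant, after which the Fisher-type counting bound from the intersecting condition closes the argument immediately. The only points to be careful about are using $k>1$ to ensure $k(k-1)>0$, and handling the boundary case $k=\sqrt{n}$, where the inequality $0\ge k(k-1)$ still contradicts $k>1$.
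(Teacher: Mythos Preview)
Your proof is correct, but it differs from the paper's. The paper fixes a set $x\in\A$, chooses $\sigma\in\Aut(\A)$ uniformly at random, and uses transitivity to compute $\E[|x\cap\sigma(x)|]=k^2/n\le 1$; since the identity contributes $|x\cap x|=k>1$, some $\sigma$ must give $x\cap\sigma(x)=\emptyset$, directly exhibiting a disjoint pair inside $\A$. Your argument instead extracts from transitivity only the regularity statement $d_i\equiv d$, combines the identity $|\A|=nd/k$ with the Fisher-type bound $|\A|-1\le k(d-1)$ coming from the intersecting condition, and reaches an arithmetic contradiction. Both are short and elementary; the paper's averaging argument is a touch slicker and actually produces two disjoint members of $\A$, while your counting route has the mild advantage of making the size constraint $|\A|\le k(d-1)+1$ explicit along the way.
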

\begin{proof}
The proposition follows from a simple averaging argument. Indeed, let $k \le \sqrt{n}$, suppose for a contradiction that $\A \subset \NK$ is a nonempty, symmetric intersecting family, and let $x \in \A$. If we choose $\sigma \in \Aut(\A)$ uniformly at random, then since $\Aut(\A)$ is transitive, we have
\[\E[ |x \cap \sigma(x)|] = \frac{k^2}{n} \le 1,\]
where the first equality above depends on the fact that
\[|\sigma \in \Aut(\A) : \sigma(i) = j| = |\sigma \in \Aut(\A) : \sigma(i) = k|\]
for all $i, j, k \in [n]$. Since $|x \cap \text{Id}(x)| = k > 1$, there must exist a permutation $\sigma \in \Aut(\A)$ such that $x \cap \sigma(x) = \emptyset$, contradicting the fact that $\A$ is intersecting.
\end{proof}

For $0 \le p \le 1$, we write $\mu_p$ for the \emph{$p$-biased measure} on $\PN$, defined by
\[\mu_p(\{x\}) = p^{|x|}(1-p)^{n-|x|} \]
for each $x \subset [n]$, and by
$$\mu_p(\mathcal{F}) = \sum_{x \in \mathcal{F}}\mu_p(\{x\})$$
for each $\mathcal{F} \subset \PN$. We say that a family $\F \subset \PN$ is {\em increasing} if it is closed under taking supersets, i.e., if $x \in \F$ and $x \subset y$, then $y \in \F$.  It is easy to see that if $\F \subset \PN$ is increasing, then $p \mapsto \mu_p(\F)$ is a monotone non-decreasing function on $[0,1]$. For a family $\F \subset \PN$, we write $\F^{\uparrow}$ for the smallest increasing family containing $\F$; in other words, $\F^{\uparrow} = \{y \subset [n]: x \subset y \text{ for some }x \in \F\}$. 

For any family $\mathcal{A} \subset [n]^{(k)}$, we write
$$\partial^+ \mathcal{A}: = \{x \in [n]^{(k+1)}:\ x \supset y \text{ for some }y \in \mathcal{A}\}$$
for the {\em upper shadow} of $\mathcal{A}$, and 
$$\partial^{+(t)}(\mathcal{A}) := \{x \in [n]^{(k+t)}:\ x \supset y \text{ for some }y \in \mathcal{A}\} = \partial^{+}(\partial^{+(t-1)}\mathcal{A})$$
for its $t$th iterate (for each $t \in \mathbb{N}$ with $t\leq n-k$). The local LYM inequality (see e.g.\ \cite{bollobas}, \S 3) states that for any integers $1 \leq k < n$ and any family $\mathcal{A} \subset [n]^{(k)}$, we have
$$\frac{|\partial^{+} \mathcal{A}|}{{n \choose k+1}}\geq \frac{|\mathcal{A}|}{{n \choose k}}.$$
Iterating the local LYM inequality yields
\begin{equation}\label{eq:ll-it} \frac{|\partial^{+(t)} \mathcal{A}|}{{n \choose k+t}}\geq \frac{|\mathcal{A}|}{{n \choose k}}\end{equation}
for all $t \leq n-k$.

We need the following fact, which allows one to bound from above the size of a family $\F \subset \NK$ in terms of $\mu_p(\F^{\uparrow})$, where $p \approx k/n$; this was proved in a slightly different form by Friedgut~\citep{F}. We provide a proof for completeness.
\begin{lemma}
\label{lem:chernoff}
Let $n,k \in \mathbb{N}$ and suppose that $0 < p,\phi < 1$ satisfy
\[p \ge \frac{k}{n} + \frac{\sqrt{2n \log (1/\phi)}}{n}.\]
Then for any family $\F \subset \NK$, we have
\[\mu_p(\F^{\uparrow}) > (1-\phi) \frac{|\F|}{\binom{n}{k}}.\]
\end{lemma}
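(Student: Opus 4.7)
The plan is to build a two-step coupling linking a uniformly random $k$-subset of $[n]$ to a $p$-biased random subset of $[n]$. First, sample $Y \subset [n]$ from $\mu_p$; then, on the event $E = \{|Y| \ge k\}$, sample a uniformly random $k$-subset $X$ of $Y$. The key observation is that, conditional on $E$, the marginal distribution of $X$ is uniform on $\NK$: for any fixed $x \in \NK$ and any integer $m \ge k$, a short binomial computation gives
\[\Prob[X = x \mid |Y| = m] = \frac{\binom{n-k}{m-k}}{\binom{n}{m}} \cdot \frac{1}{\binom{m}{k}} = \frac{1}{\binom{n}{k}},\]
which is independent of $m$, so the uniform marginal persists after averaging over $|Y|$ conditioned on $E$.

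Given this, the lower bound on $\mu_p(\F^{\uparrow})$ drops out immediately. Whenever $X \in \F$ we automatically have $X \subset Y$, so $Y \in \F^{\uparrow}$; hence
\[\mu_p(\F^{\uparrow}) = \Prob[Y \in \F^{\uparrow}] \ge \Prob[X \in \F] = \Prob[E] \cdot \frac{|\F|}{\binom{n}{k}}.\]

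It therefore remains to bound $\Prob[E] = \Prob[|Y| \ge k]$ from below. Since $|Y| \sim \Bi(n,p)$ has mean $np \ge k + \sqrt{2n \log(1/\phi)}$, Hoeffding's inequality applied with deviation $t = np - k$ yields
\[\Prob[|Y| < k] \le \exp\left(-\frac{2(np - k)^2}{n}\right) \le \exp\bigl(-4 \log(1/\phi)\bigr) = \phi^4 < \phi,\]
so $\Prob[E] > 1 - \phi$; combining this with the previous display produces the claimed inequality.

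The only conceptually delicate point is choosing the right two-step sampling procedure. Once it is in place, the verification that $X$ is uniform on $\NK$ is a one-line binomial identity, and the tail estimate on $\Bi(n,p)$ is a textbook application of Hoeffding's inequality, so I do not anticipate any substantial obstacle.
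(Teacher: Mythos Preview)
Your proof is correct and reaches exactly the same intermediate inequality as the paper, namely $\mu_p(\F^{\uparrow}) \ge \Prob[\Bi(n,p) \ge k]\cdot |\F|/\binom{n}{k}$, followed by a binomial tail bound. The only difference is in packaging: the paper obtains this inequality by invoking the local LYM inequality layer by layer (so that $|\F^{\uparrow}\cap[n]^{(l)}|/\binom{n}{l} \ge |\F|/\binom{n}{k}$ for each $l\ge k$) and then summing against the binomial weights, whereas your two-step coupling (sample $Y\sim\mu_p$, then a uniform $k$-subset $X\subset Y$ on the event $|Y|\ge k$) is a self-contained probabilistic reformulation of exactly the same fact---the identity $\Prob[X=x\mid |Y|=m]=1/\binom{n}{k}$ is the local LYM inequality with equality, summed over supersets. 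Your use of Hoeffding rather than the multiplicative Chernoff bound is also cosmetic; both give $\Prob[\Bi(n,p)<k]<\phi$ under the stated hypothesis on $p$. So the two proofs are essentially the same argument, with your version having the minor advantage of not needing to name the LYM inequality explicitly.
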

\begin{proof}
Let $\delta = |\F|/\binom{n}{k}$ and let $X$ be a binomial random variable with distribution $\Bi(n,p)$. First, for each $l\ge k$, (\ref{eq:ll-it}) implies that 
\[\frac{|\F^{\uparrow} \cap [n]^{(l)} |}{ \binom{n}{l}} \ge \frac{|\F|}{\binom{n}{k}} = \delta.\]
Now, for any $\eta > 0 $, it follows from a standard Chernoff bound that
\[\Prob(X < (1-\eta)np) < \exp(-\eta^2np/2). \]
Hence, 
\begin{align*}
\mu_{p}\left(\F^{\uparrow}\right) & \ge\sum_{l=k}^{n}p^{l}\left(1-p\right)^{n-l}\binom{n}{l}\delta\\
&= \Prob(X \ge k) \delta\\ 
&> (1-\phi)\delta,
\end{align*}
where the last inequality above follows from a standard calculation.
\end{proof}

We will also require the following sharp threshold result due to Friedgut and the second author~\citep{FK}.
\begin{theorem}
\label{thm:fk}
There exists a universal constant $c_0>0$ such that the following holds for all $n \in \mathbb{N}$. Let $0 \leq p < 1$, $0 < \epsilon < 1$ and let $\F \subset \PN$ be a symmetric increasing family. If $\mu_p(\F)> \eps$, then $\mu_q(\F) > 1 - \eps$, where
\[q = \min\left\{1,p + c_0 \frac{\log \tfrac{1}{\epsilon}}{ \log n}\right\}.\eqno\qed\]
\end{theorem}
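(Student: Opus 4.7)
The plan is to combine the Margulis--Russo differential identity with a sharp $p$-biased Kahn--Kalai--Linial-type influence lower bound that is available for symmetric monotone families, and then integrate the resulting differential inequality. For any increasing $\F \subset \PN$, the Margulis--Russo formula reads
\[\frac{d}{dp}\mu_p(\F) = \sum_{i=1}^n I_i^{(p)}(\F),\]
where $I_i^{(p)}(\F) = \mu_p(\{x : \mathbf{1}_\F(x^{i\to 1}) \ne \mathbf{1}_\F(x^{i\to 0})\})$ is the $p$-biased influence of coordinate $i$ on $\F$. Since $\Aut(\F)$ acts transitively on $[n]$ and every element of $\Aut(\F)$ preserves the product measure $\mu_p$, the influences $I_i^{(p)}(\F)$ are equal for all $i \in [n]$.

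The heart of the argument is a lower bound of the form
\[\sum_{i=1}^n I_i^{(p)}(\F) \ge c \cdot \frac{\mu_p(\F)(1-\mu_p(\F)) \log n}{p\log(1/p)},\]
which is the $p$-biased strengthening of the Kahn--Kalai--Linial influence theorem that becomes available precisely when every coordinate has the same influence. I would derive it via the Bonami--Beckner hypercontractive inequality for the $p$-biased noise operator: the hypercontractive constant of this semigroup degenerates like $\log(1/p)$ as $p \to 0$, and the additional factor of $1/p$ reflects the fact that a coordinate is ``on'' with probability only $p$ under $\mu_p$, so its influence must be rescaled accordingly.

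Dividing the displayed inequality by $\mu_p(\F)(1-\mu_p(\F))$ turns the Margulis--Russo identity into the differential inequality
\[\frac{d}{dp}\log\frac{\mu_p(\F)}{1-\mu_p(\F)} \ge \frac{c\log n}{p\log(1/p)},\]
and the antiderivative $\int dp/(p\log(1/p)) = -\log\log(1/p)$ makes the integration explicit. Suppose for contradiction that $\mu_p(\F) > \eps$ but $\mu_q(\F) \le 1-\eps$; integrating from $p$ to $q$ yields
\[2\log\frac{1-\eps}{\eps} \le c\log n \cdot \log\frac{\log(1/p)}{\log(1/q)}.\]
A first-order expansion of the right-hand side (valid provided $q$ is not too much larger than $p$) shows $q - p \ge c_0\, p\log(1/p)\log(1/\eps)/\log n$, contradicting the hypothesis that $q$ equals $p$ plus this quantity. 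The truncation $q = 1$ in the theorem is trivial, since any nonempty increasing $\F$ contains $[n]$ and so satisfies $\mu_1(\F) = 1$.

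The main obstacle is the $p$-biased influence inequality displayed above: it is not a direct consequence of the classical $p = 1/2$ Kahn--Kalai--Linial theorem, and obtaining the correct $p\log(1/p)$ scaling in the threshold width requires carefully tracking the $p$-dependence of the hypercontractivity constant for the $p$-biased cube and combining this with the symmetry-forced equality of all coordinate influences.
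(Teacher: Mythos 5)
This theorem is not proved in the paper at all: it is quoted (note the \(\qed\) at the end of the statement) as a known sharp threshold result of Friedgut and Kalai~\citep{FK}, so there is no internal argument to compare yours against; what you have written is an attempt to reconstruct the proof of the cited result. Your outline does follow the standard route by which such results are proved --- the Margulis--Russo formula, a KKL-type influence bound on the \(p\)-biased cube, transitivity forcing all coordinate influences to be equal, and integration of the resulting differential inequality --- but as a proof it has a genuine gap exactly where you flag ``the main obstacle''. The inequality \(\sum_i I_i^{(p)}(\F) \ge c\,\mu_p(\F)(1-\mu_p(\F))\log n/(p\log(1/p))\) for symmetric increasing families \emph{is} the substance of the theorem, and the justification offered (``the hypercontractive constant degenerates like \(\log(1/p)\), and the extra \(1/p\) reflects rescaling of influences'') is a heuristic, not a derivation. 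The \(p\log(1/p)\) scaling is delicate: the straightforward route through the Bourgain--Kahn--Kalai--Katznelson--Linial theorem (encoding each \(p\)-biased bit by a uniform variable, under which the continuous influence of an increasing family equals the pivotal probability) yields only \(\sum_i I_i^{(p)} \ge c\,\mu(1-\mu)\log n\), i.e.\ threshold width \(O(\log(1/\eps)/\log n)\) with no factor of \(p\); obtaining the sharper form requires Talagrand-type \(p\)-biased inequalities or a genuinely careful \(p\)-biased hypercontractive argument, together with a bootstrapping step using the fact that for symmetric families the common influence equals \((d\mu_p/dp)/n\). None of this is supplied, so the heart of the theorem remains unproved. (Also, as stated for all \(0<p<1\) your influence bound cannot be right, since its right-hand side blows up as \(p\to 1\); one needs \(p\) bounded away from \(1\) or a \(p(1-p)\log\frac{2}{p(1-p)}\)-type normalisation --- harmless for the application here, where \(p\approx k/n\le 1/2\), but it should be said.)

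There is also a directional error in the concluding step. Under the contradiction hypothesis \(\mu_p(\F)>\eps\), \(\mu_q(\F)\le 1-\eps\), the increase of \(\log\frac{\mu_r(\F)}{1-\mu_r(\F)}\) over \([p,q]\) is at most \(2\log\frac{1-\eps}{\eps}\), while your differential inequality makes it at least \(c\log n\cdot\log\bigl(\log(1/p)/\log(1/q)\bigr)\); the correct conclusion is therefore \(c\log n\,\log\bigl(\log(1/p)/\log(1/q)\bigr) \le 2\log\frac{1-\eps}{\eps}\), which after the first-order expansion gives \(q-p = O\bigl(p\log(1/p)\log(1/\eps)/\log n\bigr)\), and this contradicts the definition of \(q\) only once \(c_0\) is chosen larger than the implied constant (with the \(q=1\) truncation handled as you do). As written --- ``shows \(q-p\ge c_0\,p\log(1/p)\log(1/\eps)/\log n\), contradicting the hypothesis that \(q\) equals \(p\) plus this quantity'' --- the inequality points the wrong way and yields no contradiction at all. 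For the purposes of this paper, the honest options are either to cite~\citep{FK} for the theorem, as the authors do, or to carry out the \(p\)-biased influence inequality in full; the sketch as it stands does neither.
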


The idea of the proof of Theorem~\ref{thm:main} is as follows. Let $\A \subset \NK$ be a symmetric intersecting family. We first use Lemma~\ref{lem:chernoff} to bound $|\A|/\binom{n}{k}$ from above in terms of $\mu_{p}(\A^{\uparrow})$, where $p \approx k/n$; we then use Theorem~\ref{thm:fk}, together with the simple fact that $\mu_{1/2}(\A^{\uparrow}) \le 1/2$, to bound $\mu_{p}(\A^{\uparrow})$, and hence $|\A|$, from above.  Let us also remark that this strategy of `approximating' the uniform measure on $[n]^{(k)}$ with the $p$-biased measure $\mu_p$, where $p \approx k/n$, has proven useful on a number of different occasions in the study of uniform intersecting families; see~\citep{F, df, elk}, for example.
\begin{proof}[Proof of Theorem~\ref{thm:main}]
Let $n,k \in \mathbb{N}$ with $k \le n/2$, let $\A \subset \NK$ be a symmetric intersecting family, and set $\delta = |\A|/\binom{n}{k}$.

First, applying Lemma~\ref{lem:chernoff} with $p = k/n + \sqrt{(2 \log 2) n}/n$ and $\phi = 1/2$, we see that
\[\mu_p(\A^{\uparrow}) > \frac{\delta}{2}.\]
Next, since $\A$ is symmetric, so is $\A^{\uparrow}$. We may therefore apply Theorem~\ref{thm:fk} with $\eps = \delta/2$ to deduce that $\mu_q(\A^{\uparrow}) > 1/2$, where
\[q = \min\left\{1,p + c_0\left(\frac{\log (2/\delta)}{ \log n}\right)\right\}.\]

Since $\A^{\uparrow}$ is increasing, the function $r \mapsto \mu_r(\mathcal{A}^{\uparrow})$ is monotone non-decreasing on $[0,1]$. Also, since $\A$ is intersecting, so is $\A^{\uparrow}$, and therefore $\mu_{1/2}(\A^{\uparrow}) \le 1/2$. Now, as $\mu_{1/2}(\A^{\uparrow}) \le 1/2$ and $\mu_q(\A^{\uparrow}) > 1/2$, the monotonicity of $r \mapsto \mu_r(\mathcal{A}^{\uparrow})$ implies that
\[p + c_0 \left(\frac{\log (2/\delta)}{ \log n}\right) > \frac{1}{2}.\]
Rearranging this inequality, we see that
\[\delta < 2\exp\left( -\frac{(1-2p)\log n}{2c_0}\right) \le  2\exp\left(-\frac{c(n-2k)\log n}{n} \right),\]
where $c>0$ is a universal constant; this proves the theorem.
\end{proof}

\section{Lower bounds for large $k$}\label{s:cons}
In this section, we give a construction showing that Theorem~\ref{thm:main} is sharp up to the value of the constant $c$ in the exponent for many choices of $k = k(n)$. 

Given $n ,k \in\N$ with $k \le n$, we identify $[n]$ with $\mathbb{Z}_n$, we identify a subset $S \subset \mathbb{Z}_n$ with its characteristic vector $\chi_S \in \{0,1\}^{\mathbb{Z}_n}$, and we define $\F(n,k)$ to be the family of all $k$-element subsets $S \subset \mathbb{Z}_n$ such that the longest run of consecutive ones in $\chi_S$ is longer than the longest run of consecutive zeros in $\chi_S$. Slightly less formally, we take $\F(n,k)$ to consist of all the cyclic strings of $n$ zeros and ones which contain exactly $k$ ones and in which the longest run of consecutive ones is longer than the longest run of consecutive zeros. 

It is clear that $\F(n,k)$ is symmetric. It is also easy to check that $\F(n,k)$ is intersecting. Indeed, given $S,T \in \F(n,k)$, suppose without loss of generality that the longest run of consecutive ones in $S$ is at least as long as that in $T$. Choose a run of consecutive ones in $S$ of the maximum length; these cannot be all zeros in $T$ because otherwise $T$ would have a longer run of consecutive ones than $S$. Therefore, $S \cap T \neq \emptyset$.

We note that the non-uniform case of this construction, i.e., the family of all cyclic strings of $n$ zeros and ones in which the longest run of consecutive ones is longer than the longest run of consecutive zeros, shows that the Kahn--Kalai--Linial theorem~\citep{kkl} cannot be improved by more than a constant factor for intersecting families; see~\citep{mathoverflow} for more details.

An analysis of $\mathcal{F}(n,k)$ yields the following. \begin{lemma}
\label{l:lbgen}
There exists a universal constant ${\hat C}>0$ such that if $k = k(n) \in \mathbb{N}$ is such that $\sqrt{n} \log n \leq k \leq n/2$ for all $n \in \mathbb{N}$, then
\begin{align}\label{eq:fnk}
|\F(n,k)| & \ge n \cdot \exp\left(-(1+\hat{C}/\log n)\left(\frac{\log n - \log k}{\log n - \log (n-k)}\right) \log n \right) \binom{n}{k}\\
& = \exp\left(-(1+\hat{C}/\log n)\left(\frac{\log n - \log k}{\log n - \log (n-k)}\right) \log n + \log n \right) \binom{n}{k} \nonumber
\end{align}
\end{lemma}

A comparison of Theorem \ref{thm:main} and (\ref{eq:fnk}) shows that Theorem~\ref{thm:main} is sharp up to the constant factor in the exponent when $k/n$ is bounded away from zero and $1/2-k/n = \Omega(1/\log n)$.

Lemma \ref{l:lbgen} implies the following, in the case where $k/n$ is close to $1/2$.

\begin{lemma}\label{l:lb}
	For each $C > 0$, there exists $c > 0$ such that for any $n,k\in\N$ with $\tfrac{1}{2}(1 - \tfrac{C}{\log n}) \le \tfrac{k}{n} \le \tfrac{1}{2}$, we have
	\[|\F(n,k)| \ge c \binom{n-1}{k-1}.\]
\end{lemma}

To prove Lemma~\ref{l:lbgen}, we need the following.

\begin{lemma}
\label{lem:counting-lemma}
Let $k < n$. The number of cyclic strings of $n$ zeros and ones with exactly $k$ ones and a run of consecutive zeros of length at least $l$ is at most $\tfrac{1}{4} \binom{n}{k}$, provided
\[l \ge \frac{\log n + 2 \log 2}{\log n - \log (n-k)}.\]
\end{lemma}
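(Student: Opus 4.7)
The plan is to apply a union bound over the $n$ possible starting positions of a zero-run of length $l$, and then estimate the resulting ratio of binomial coefficients.

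For $i \in \mathbb{Z}_n$, let $E_i$ denote the set of cyclic binary strings (of length $n$ with exactly $k$ ones) whose entries at positions $i, i+1, \dots, i+l-1$ (indices mod $n$) are all zero. Having a run of at least $l$ consecutive zeros is, by definition, the same as lying in $\bigcup_{i \in \mathbb{Z}_n} E_i$. Since fixing $l$ specified positions to be zero reduces the count to placing $k$ ones in the remaining $n-l$ positions, we have $|E_i| = \binom{n-l}{k}$ for every $i$, and hence by the union bound the total number of strings with a run of at least $l$ zeros is at most $n\binom{n-l}{k}$.

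Next I would estimate the ratio $\binom{n-l}{k}/\binom{n}{k}$. Writing
\[
\frac{\binom{n-l}{k}}{\binom{n}{k}} = \prod_{i=0}^{l-1}\frac{n-k-i}{n-i} = \prod_{i=0}^{l-1}\left(1 - \frac{k}{n-i}\right),
\]
each factor is at most $(n-k)/n$ (the factors decrease in $i$), so the product is bounded above by $((n-k)/n)^l$. Therefore the number of relevant strings is at most $n\left(\frac{n-k}{n}\right)^l\binom{n}{k}$.

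It remains to verify that $n((n-k)/n)^l \le 1/4$ under the stated hypothesis. Taking logarithms, this is equivalent to $l\left(\log n - \log(n-k)\right) \ge \log n + 2\log 2$, which rearranges to exactly the assumed lower bound on $l$. This yields the desired estimate $\tfrac14\binom{n}{k}$, completing the argument. No serious obstacle is expected — the argument is a clean union bound combined with the elementary inequality $\binom{n-l}{k}/\binom{n}{k} \le ((n-k)/n)^l$; the only point requiring mild care is correctly interpreting the cyclic positions and confirming that the factors in the above product are maximised at $i = 0$.
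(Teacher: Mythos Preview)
Your proposal is correct and follows essentially the same route as the paper: a union bound over the $n$ cyclic starting positions gives the upper bound $n\binom{n-l}{k}$, and then the elementary estimate $\binom{n-l}{k}/\binom{n}{k} \le ((n-k)/n)^l$ together with the hypothesis on $l$ yields the factor $\tfrac14$. The paper's proof is virtually identical, differing only in that it writes the ratio as $\frac{(n-k)(n-k-1)\cdots(n-k-l+1)}{n(n-1)\cdots(n-l+1)}$ before applying the same termwise bound.
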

\begin{proof}
The number of such strings is at most $n \binom{n-l}{k}$, since (possibly overcounting) there are $n$ choices for the position of the run of $l$ consecutive zeros, and then $\binom{n-l}{k}$ choices for the positions of the ones. We have
\[\frac{n \binom{n-l}{k}}{\binom{n}{k}} = \frac{n(n-k)(n-k-1)\dots(n-k-l+1)}{n(n-1)\dots(n-l+1)} \le n\left(\frac{n-k}{n}\right)^l \le \frac{1}{4}\]
provided $l \ge (\log n + 2 \log 2)/(\log n - \log (n-k))$, as required.
\end{proof}

We now make the following straightforward claim.
\begin{claim}
\label{claim:flip}
If $1 \leq l \leq k \leq n/2$, then the number of cyclic strings of length $n$ with $k$ ones and a run of consecutive ones of length at least $l$ is at most the number of cyclic strings of length $n$ with $k$ ones and a run of consecutive zeros of length at least $l$.
\end{claim}
\begin{proof}[Proof of Claim \ref{claim:flip}.]
Let $k \leq n/2$ and let $\mathcal{A} \subset [n]^{(k)}$. Applying (\ref{eq:ll-it}) with $t = n-2k$, and using the fact that ${n\choose k} = {n \choose n-k}$, yields
\begin{equation}\label{eq:comp}\frac{|\partial^{+(n-2k)} \mathcal{A}|}{{n \choose k}} = \frac{|\partial^{+(n-2k)} \mathcal{A}|}{{n \choose n-k}} \geq \frac{|\mathcal{A}|}{{n \choose k}}.\end{equation}
Now let $1\leq l \leq k \leq n/2$, let $\mathcal{A}$ be the family of cyclic strings of length $n$ with $k$ ones and a run of consecutive ones of length at least $l$, and let $\mathcal{B}$ be the family of cyclic strings of length $n$ with $n-k$ ones and a run of consecutive ones of length at least $l$. Clearly, we have $\mathcal{B} = \partial^{+(n-2k)}\mathcal{A}$, and therefore by (\ref{eq:comp}), we have $|\mathcal{A}| \leq |\mathcal{B}|$. But, by flipping zeros and ones, it is clear that $|\mathcal{B}|$ is precisely the number of cyclic strings of length $n$ with $k$ ones and a run of consecutive zeros of length at least $l$, proving the claim.
\end{proof}

The following is immediate from Lemma \ref{lem:counting-lemma} and Claim \ref{claim:flip}.
\begin{corollary}
\label{corr:none}
Let $k \le n/2$. The number of cyclic strings of length $n$ with $k$ ones and no run of $l$ consecutive zeros or ones is at least $\tfrac{1}{2} \binom{n}{k}$, provided
\[l \ge \frac{\log n + 2 \log 2}{\log n - \log (n-k)}. \eqno\qed\]
\end{corollary}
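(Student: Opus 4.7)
The statement is essentially a union bound built on the previous lemma together with the observation highlighted just before the corollary, so the plan is very short. The preceding lemma, applied under the assumed lower bound on $l$, already gives that at most $\tfrac{1}{4}\binom{n}{k}$ cyclic strings with exactly $k$ ones contain a run of $\ge l$ consecutive zeros. If I can also bound the number of cyclic strings with $k$ ones containing a run of $\ge l$ consecutive ones by $\tfrac{1}{4}\binom{n}{k}$, then the number of strings avoiding both types of long run is at least $\binom{n}{k} - 2 \cdot \tfrac{1}{4}\binom{n}{k} = \tfrac{1}{2}\binom{n}{k}$.

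The only thing that needs a moment of thought is the symmetry-style claim that, when $k \le n/2$, there are no more strings with a long run of ones than with a long run of zeros. I plan to argue this by the same crude counting used in the lemma: for each of the $n$ possible cyclic starting positions of a distinguished run of length $l$, the number of ways to extend it to a cyclic string of $k$ ones is at most $\binom{n-l}{k-l}$ when the run is of ones, and at most $\binom{n-l}{k}$ when the run is of zeros. The inequality $\binom{n-l}{k-l}\le\binom{n-l}{k}$ then follows from $k\le n/2$, since both $k-l$ and $k$ lie on the same side of $(n-l)/2$ and $k-l$ is farther from $(n-l)/2$ than $k$ is. Thus the same upper bound $n\binom{n-l}{k}\le\tfrac{1}{4}\binom{n}{k}$ used in the lemma also controls the strings with a long run of ones.

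Putting these two estimates together and subtracting from $\binom{n}{k}$ gives the stated bound. The main (and only) obstacle is verifying the comparison between $\binom{n-l}{k-l}$ and $\binom{n-l}{k}$; every other step is either a direct quotation of the preceding lemma or an elementary union bound, which is why the corollary can legitimately be labelled ``immediate''.
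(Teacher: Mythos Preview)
Your proposal is correct and follows essentially the same route as the paper: the paper simply asserts that when $k \le n/2$ the number of cyclic strings with a long run of ones is at most the number with a long run of zeros, and then combines this with the preceding lemma via a union bound; you make this explicit by bounding both counts by the same crude estimate $n\binom{n-l}{k}$. One minor imprecision in your justification: it is not always true that $k-l$ and $k$ lie on the \emph{same side} of $(n-l)/2$ (for instance $n=10$, $k=5$, $l=2$ gives $(n-l)/2=4$ with $k-l=3$ and $k=5$); what is true, and is all you need, is that $|(k-l)-(n-l)/2| \ge |k-(n-l)/2|$ whenever $k \le n/2$, which yields $\binom{n-l}{k-l}\le\binom{n-l}{k}$ by unimodality of the binomial coefficients.
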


We are now ready to prove Lemma~\ref{l:lbgen}.
\begin{proof}[Proof of Lemma~\ref{l:lbgen}]
Choose $l_0 \in \mathbb{N}$ such that 
\begin{equation} \label{eq:l-condition} l_0-1 \ge \frac{\log (n-l_0-2) + 2 \log 2}{\log (n-l_0-2) - \log (n-k-2)}. \end{equation}
Observe that $\F(n,k)$ contains all cyclic strings of length $n$ with $k$ ones, precisely one run of $l_0$ consecutive ones, all other runs of consecutive ones having length at most $l_0-2$, and no run of $l_0$ consecutive zeros. We claim that if $l_0 < n/2$, then the number of such strings is at least
\begin{equation}\label{eq:comb-bound}\frac{n}{2} \binom{n-l_0-2}{k - l_0}.\end{equation}
Indeed, there are $n$ choices for the position of the run of $l_0$ consecutive ones, and there must be a zero on each side of this run of ones. Now, there are at least $\tfrac{1}{2} \binom{n-l_0-2}{k - l_0}$ choices for the remainder of the cyclic string (by Corollary~\ref{corr:none}), since if we take a cyclic string of length $n-l_0-2$ which contains no run of $l_0-1$ consecutive ones or zeros, and then insert (at some point) a run of $l_0$ consecutive ones with a zero on either side into this string, then the resulting string has the desired property provided $l_0 < n/2$.

It is easily checked that if $\sqrt{n} \log n \leq k \leq n/2$, then we may choose $l_0 \in \mathbb{N}$ satisfying~\eqref{eq:l-condition} such that
\begin{equation}\label{eq:l0bound} l_0 = (1+O(1/\log n)) \frac{\log n}{\log n - \log (n-k)}.\end{equation}
Indeed, choose 
$$l_0 = (1+\epsilon) \frac{\log n}{\log n - \log (n-k)},$$
where $\epsilon= O(1/\log n)$ is to be chosen later. Then, using the fact that $\log n - \log (n-k) = \Omega(k/n)$, we have $l_0 = O((n \log n)/k) = O(\sqrt{n})$, and therefore
\begin{align}\label{eq:bound-frac-1}\frac{\log (n-l_0-2)-\log(n-k-2)}{\log n - \log(n-k)} & = 1-\frac{\log(1+\tfrac{l_0+2}{n-l_0-2}) -\log(1+\tfrac{2}{n-k-2})}{\log n - \log(n-k)} \nonumber\\
& = 1-O(\tfrac{l_0}{k}) \nonumber\\
& = 1-O((n \log n)/k^2) \nonumber\\
& = 1-O(1/\log n).\end{align}
Provided $\epsilon \geq C/\log n$ for some absolute constant $C$, we have
\begin{equation}\label{eq:bound-frac-2}l_0 - 1 \geq (1+\tfrac{C}{2\log n})\frac{\log n}{\log n - \log (n-k)}.\end{equation}
Finally, we clearly have
\begin{equation}\label{eq:bound-frac-3}\frac{\log n}{\log (n-l_0-2)+2\log 2} = 1-O(1/\log n).\end{equation}
Putting (\ref{eq:bound-frac-1}), (\ref{eq:bound-frac-2}) and (\ref{eq:bound-frac-3}) together, we obtain
\begin{align*} l_0 - 1 & \geq (1+\tfrac{C}{2\log n})\frac{\log n}{\log n - \log (n-k)}\\
&\geq (1+\tfrac{C}{2\log n})(1-O(1/\log n)) \frac{\log (n-l_0-2)+2\log 2}{\log (n-l_0-2)-\log(n-k-2)},\end{align*}
yielding (\ref{eq:l-condition}) provided $C$ is sufficiently large.

Provided $n$ is at least an absolute constant (which we may assume), we have $l_0 < n/2$, and therefore, using (\ref{eq:comb-bound}), we have
\begin{align*} |\F(n,k)| & \ge \frac{n}{2} \binom{n-l_0-2}{k - l_0} \ge \frac{n}{2} \binom{n-l_0-2}{k - l_0-2}\\
& \ge \frac{n}{2} \left(\frac{k-l_0-2}{n-l_0-2}\right)^{l_0+2} \binom{n}{k}\\
&\ge \Omega(1) \cdot n \cdot \left(\tfrac{k}{n}\right)^{l_0+2}\binom{n}{k}\\
& \ge n \cdot \exp\left(-(1+O(1/\log n))\left(\frac{\log n - \log k}{\log n - \log (n-k)}\right) \log n \right) \binom{n}{k},
\end{align*}
proving the lemma.
\end{proof}

We now deduce Lemma~\ref{l:lb} from Lemma \ref{l:lbgen}.
\begin{proof}[Proof of Lemma~\ref{l:lb}]
Defining $\eta: = 1/2-k/n$, we have $\eta\leq C/\log n$, and
\[\frac{\log n - \log k}{\log n - \log (n-k)} = \frac{\log 2 - \log(1-2\eta)}{\log 2 - \log(1+2\eta)} = 1+O(\eta).\]
Hence, it follows  from Lemma~\ref{l:lbgen} that
\[|\F(n,k)| \ge n \cdot \exp\left(-(1+\hat{C}/\log n)(1+O(C/\log n))\log n \right) \binom{n}{k} \geq c\binom{n-1}{k-1}\]
provided $c$ is sufficiently small depending on $\hat{C}$ and $C$, as required.
\end{proof}

Theorem~\ref{prop:tight} is immediate from Theorem \ref{thm:main} and Lemma \ref{l:lb}.

\section{Lower bounds for small $k$}\label{s:smallk}
In the previous two sections, we focussed on estimating the largest possible cardinality $s(n,k)$ of a symmetric intersecting subfamily of $\NK$. We now turn our attention to estimating the smallest possible uniformity $k = g(n)$ for which there exists a nonempty, symmetric intersecting subfamily of $\NK$. To this end, we will investigate the set
\[\mathcal{S}=\{(n,k) \in \mathbb{N}^2:s(n,k) >0\}.\]
Along the way, we describe some constructions of symmetric intersecting families that are larger than $\F(n,k)$ for certain values of $n$ and $k$. 

We have already seen (in Proposition~\ref{rootn}) that $g(n) > \sqrt{n}$ for all $n \ge 2$. Let us now consider upper bounds on $g(n)$.

It is easy to check that $\F(n,k) \neq \emptyset$ provided $n \leq \lfloor k^2/4\rfloor +k$. (Consider the cyclic string $1^{\ell} (0^{\ell-1}1)^{t}0^{\ell-1}$, where $\ell = \lfloor k/2\rfloor+1$, $t = \lceil k/2\rceil-1$, and $n = \lfloor k^2/4\rfloor +k$; here, as usual, if $S$ is a string, $S^N$ denotes $S$ concatenated with itself $N$ times.) This observation implies that
\begin{equation} \label{eq:triv-bound} g(n) \leq 2\sqrt{n}\end{equation}
for all $n\in\N$. To improve~\eqref{eq:triv-bound}, we note a strong connection between the problem of determining $g(n)$ and the problem of finding a so-called {\em difference cover} in an Abelian group. If $G$ is a finite Abelian group and $S \subset G$, we say that $S$ is a {\em difference cover for $G$} if $S - S = G$, i.e., if $\{ i - j: i,j \in S\} = G$; we then define
\[h(G) = \min\{|S|: S \text{ is a difference cover for }G\}.\]
Note that if $S \subset G$, then $S$ is a difference cover for $G$ if and only if the family of all the translates of $S$ is an intersecting family of subsets of $G$. This observation yields the following.
\begin{lemma}
\label{l:diff-cov}
For all $n \in \mathbb{N}$, we have $g(n) \leq h(\mathbb{Z}_n)$, with equality holding in the case where $n$ is prime.
\end{lemma}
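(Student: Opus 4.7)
The lemma has two parts, one for each direction, and my plan handles them separately.

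For the inequality $g(n) \le h(\mathbb{Z}_n)$, I would take a minimum-size difference cover $S \subset \mathbb{Z}_n$, identify $[n]$ with $\mathbb{Z}_n$, and consider the family of all translates $\A = \{S + t : t \in \mathbb{Z}_n\}$. This family lies in $\mathbb{Z}_n^{(|S|)}$, is symmetric (since the group of translations lies in $\Aut(\A)$ and already acts transitively on $[n]$), and is intersecting: for any $s, t \in \mathbb{Z}_n$, an element $x \in (S+s) \cap (S+t)$ exists precisely when $s - t \in S - S$, and this holds because $S - S = \mathbb{Z}_n$. Hence $s(n, |S|) > 0$, giving $g(n) \le |S| = h(\mathbb{Z}_n)$.

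For the reverse inequality when $n = p$ is prime, I would start with a nonempty symmetric intersecting family $\A \subset [p]^{(g(p))}$ and aim to extract from it a difference cover of size $g(p)$. The key input is a standard consequence of Cauchy's theorem: since $\Aut(\A)$ acts transitively on a set of prime cardinality $p$, orbit--stabiliser gives $p \mid |\Aut(\A)|$, so $\Aut(\A)$ contains an element of order $p$, which (being a permutation of $[p]$ of prime order $p$) must be a $p$-cycle. Relabelling $[p]$ as $\mathbb{Z}_p$ so that this $p$-cycle becomes the shift $t \mapsto t+1$, the cyclic group of translations sits inside $\Aut(\A)$. Now I pick any $S \in \A$: every translate $S + t$ also lies in $\A$, and the intersecting property forces $(S + s) \cap (S + t) \neq \emptyset$ for all $s, t$, equivalently $S - S = \mathbb{Z}_p$. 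Thus $S$ is a difference cover for $\mathbb{Z}_p$ of size $g(p)$, so $h(\mathbb{Z}_p) \le g(p)$.

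Combining the two directions yields the lemma. The only nontrivial ingredient is the step that promotes transitivity on a prime-sized set to the presence of a regular cyclic subgroup; everything else is just unwinding the definitions of \enquote{difference cover} and \enquote{intersecting family of translates}. I do not anticipate any real obstacle, but care is needed to state explicitly why an order-$p$ element of $S_p$ is forced to be a single $p$-cycle (the cycle lengths in its disjoint-cycle decomposition must each divide $p$, and they must sum to $p$, leaving only the possibility of a single $p$-cycle).
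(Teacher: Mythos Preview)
Your proposal is correct and follows essentially the same route as the paper's proof: translates of a minimum difference cover give the inequality $g(n)\le h(\mathbb{Z}_n)$, and for prime $n$ one uses transitivity plus orbit--stabiliser to find a $p$-cycle in $\Aut(\A)$, after which any set in $\A$ becomes a difference cover. The only cosmetic difference is that the paper invokes Sylow's theorem where you invoke Cauchy's theorem; since $n$ is prime these amount to the same thing, and your version is arguably the more direct citation.
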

\begin{proof}
Let $h=h(\mathbb{Z}_n)$ and write $\mathbb{Z}_n^{(h)}$ for the family of $h$-element subsets of $\mathbb{Z}_n$. By definition, there exists $S \in \mathbb{Z}_n^{(h)}$ such that $S - S = \mathbb{Z}_n$. Let $\A = \{S + j: j \in \mathbb{Z}_n\} \subset \mathbb{Z}_n^{(h)}$ denote the family of all the translates of $S$. Then $\A$ is clearly symmetric and intersecting. Hence, $g(n) \leq h$, proving the first part of the claim.

Now suppose that $n$ is prime, and let $g(n)=k$. Let $\mathcal{A} \subset [n]^{(k)}$ be a nonempty, symmetric intersecting family. Since $\Aut(\mathcal{A}) \leq S_{n}$ is transitive, the orbit-stabilizer theorem implies that $n$ divides $|\Aut(\A)|$, and therefore by Cauchy's theorem, $\Aut(\mathcal{A})$ has a cyclic subgroup $H$ of order $n$. Let $\sigma \in S_{n}$ be a generator of $H$; then $\sigma$ is an $n$-cycle, and by relabelling the ground set $[n]$ if necessary, we may assume that $\sigma = (1\ 2\  \dots \ n)$ (in the standard cycle notation). Fix $x \in \mathcal{A}$ and note that $\mathcal{B}=\{x,\sigma(x),\dots,\sigma^{n-1}(x)\}$ is also a nonempty, symmetric intersecting family as $H \leq \Aut(\mathcal{B})$. Clearly, $\mathcal{B}$ consists of all the cyclic translates, modulo $n$, of $x$. If we regard $x$ as a subset of $\mathbb{Z}_n$, then since $\mathcal{B}$ is intersecting, we have $x - x = \mathbb{Z}_n$, i.e., $x$ is a difference cover for $\mathbb{Z}_n$. Hence, $h(\mathbb{Z}_n) \leq k$ and it follows that $h(\mathbb{Z}_n) = g(n)$ when $n$ is prime, as required.
\end{proof}

We now describe how existing constructions of difference covers lead to an improvement of~\eqref{eq:triv-bound}. We say that $S \subset \mathbb{Z}$ is a {\em difference cover for $n$} if $[n] \subset S-S $. For each $n \in \mathbb{N}$, let $\pi_n\colon \mathbb{Z} \to \mathbb{Z}_n$ denote the natural projection modulo $n$ defined by $\pi_n(i) = i \imod n$ for all $i \in \mathbb{Z}$. Note that if $S \subset \mathbb{Z}$ is a difference cover for $\lfloor n/2 \rfloor$, then $\pi_n(S)$ is a difference cover for $\mathbb{Z}_n$. Building on work of R\'edei and R\'enyi~\citep{rr} and of Leech~\citep{leech}, Golay~\citep{golay} proved that for any $n \in \mathbb{N}$, there exists a difference cover for $n$ of size at most $\sqrt{cn}$, where $c<2.6572$ is an absolute constant. It follows that for any $n \in \mathbb{N}$, we have
\[g(n) \leq h(\mathbb{Z}_n) \leq 1.1527 \sqrt{n}.\]
Unfortunately, one cannot hope to answer Question~\ref{qn:threshold} in the affirmative purely by projecting difference covers for $\lfloor n/2 \rfloor$ into $\mathbb{Z}_n$ and using the fact that $g(n) \leq h(\mathbb{Z}_n)$; this is a consequence of a result of R\'edei and R\'enyi~\citep{rr} which asserts that if $S \subset \mathbb{Z}$ is a difference cover for $n$, then
\[|S| \geq \sqrt{\left(2+\frac{4}{3\pi}\right)n}.\]

In view of Lemma~\ref{l:diff-cov}, we are led to the following question, which being a natural question in its own right, has also occurred independently to others; see~\citep{banakh}, for instance.
\begin{question}
\label{q:cov}
Is it true that $h(\mathbb{Z}_n) = (1+o(1))\sqrt{n}$ for all $n \in \mathbb{N}$?
\end{question}
By Lemma~\ref{l:diff-cov}, an affirmative answer to this question would imply an affirmative answer to Question~\ref{qn:threshold}. We remark that Question~\ref{q:cov} is a `covering' problem whose `packing' counterpart has received a lot of attention. If $G$ is an Abelian group and $S \subset G$, we say that $S$ is a {\em Sidon set} in $G$ if for any non-identity element $g \in G$, there exists at most one ordered pair $(s_1,s_2) \in S^2$ such that $g = s_1-s_2$. For $n \in \mathbb{N}$, let
\[\lambda(n) = \max\{|S|: S \subset \mathbb{Z}_n \text{ such that } S \text{ is a Sidon set}\}.\]
The determination of $\lambda(n)$ is a well-known open problem; see~\citep{sidon}, for example. In particular, the following remains open.
\begin{question}
\label{qn:s}
Is it true that $\lambda(n) = (1-o(1))\sqrt{n}$ for all $n \in \mathbb{N}$?
\end{question}
The constructions of Singer~\citep{singer} and Bose~\citep{bose} yield affirmative answers to Question~\ref{qn:s} when $n$ is of the form $q^2+q+1$ or $q^2-1$ respectively, where $q$ is a prime power, and a construction due to Ruzsa~\citep{ruzsa} does so when $n$ is of the form $p^2-p$, where $p$ is prime; as observed by Banakh and Gavrylkiv~\citep{banakh}, these constructions of Singer, Bose and Ruzsa yield efficient difference covers as well, so we also have affirmative answers to Questions~\ref{q:cov} and~\ref{qn:threshold} for all $n$ of the aforementioned form.

Returning to the question of determining $g(n)$, we have shown that
\begin{equation}\label{eq:containment}
 \lfloor\sqrt{n} \rfloor + 1 \le g(n) \le 1.1527 \sqrt{n}
\end{equation}
for all $n \ge 2$. It turns out that the precise value of $g(n)$ has a nontrivial dependence on the arithmetic properties of $n$; indeed, the lower bound in~\eqref{eq:containment} is sharp for some positive integers, but strict for others. We record these facts, as well as some other properties of $g(\cdot)$, below. Since these observations don't seem to be enough resolve Question~\ref{qn:threshold} completely, we chose not to include detailed proofs of the claims below.

\begin{enumerate}[label = {\bfseries{G\arabic{enumi}}}]

\item Observe that if $d \geq 2$ and there exists a transitive projective plane of order $d$, then writing $n = d^2 + d +1$, we have $s(n,k) > 0$ if and only if $k \geq d+1$. Indeed, if $k \leq d$, then $s(n,k) = 0$ by Proposition~\ref{rootn}, while if $k \geq d+1$, then we start with a transitive projective plane $\mathbb{P}$ of order $d$ and take the family of all $k$-element subsets of the points of $\mathbb{P}$ containing a line of $\mathbb{P}$ to see that $s(n , k) >0$ in this case. In particular, for any odd prime power $q$, we have $s(q^2+q+1,k) > 0$ if and only if $k \geq q+1$; it follows that the lower bound in~\eqref{eq:containment} is sharp for any $n = q^2+q+1$ with $q$ an odd prime power, and consequently, we also get an affirmative answer to Question~\ref{qn:threshold} for all positive integers of this form.
\item On the other hand, the lower bound in~\eqref{eq:containment} is not tight for $n=43$, for example. It was shown by Lov\'asz~\citep{lov_pp} and F\"uredi~\citep{fur_pp} that if $d \ge 2$ is such that $n = d^2 + d + 1$ is prime, then $s(n, d + 1) > 0$ if and only if there exists a transitive projective plane of order $d$. Consequently, $s(43,7) = 0$ since $43$ is prime and there exists no projective plane of order $6$, so the lower bound in~\eqref{eq:containment} is not sharp in general. The Bateman--Horn conjecture~\citep{bh} would imply that $d^2+d+1$ is prime for infinitely many positive integers $d$ which are not themselves prime powers; taken together with the aforementioned observation of Lov\'asz and F\"uredi along with the non-existence conjecture for projective planes whose order is not a prime power, this would imply that $s(d^2+d+1,d+1) = 0$ for infinitely many $d \in \N$, and consequently that the lower bound in~\eqref{eq:containment} is not sharp for infinitely many positive integers.
\end{enumerate}

We can use other finite geometries in the place of projective planes to bound $g(\cdot)$; this allows us to answer Question~\ref{qn:threshold} in the affirmative for various sequences of positive integers with suitable `arithmetic structure'. 

\begin{enumerate}[label = {\bfseries{G\arabic{enumi}}}, resume]

\item For any prime power $q$, by taking the dual affine plane $\mathbb{DA}^2(\mathbb{F}_q)$ over the finite field $\mathbb{F}_q$ and considering the family of lines of $\mathbb{DA}^2(\mathbb{F}_q)$, then writing $n = q^2 + q$, we have $s(n,k) >0$ if $k \geq q+1$; this yields an affirmative answer to Question~\ref{qn:threshold} for any $n \in \N$ of this form.
\end{enumerate}

These constructions based on projective planes and dual affine planes have natural analogues based upon higher-dimensional projective spaces and higher-dimensional dual affine spaces, enabling us to answer Question \ref{qn:threshold} affirmatively for some other infinite sequences of integers.

\begin{enumerate}[label = {\bfseries{G\arabic{enumi}}}, resume]
\item Fix $r \in \mathbb{N}$, let $q$ be an prime power and consider the $(2r)$-dimensional projective space $\mathbb{P}^{2r}(\mathbb{F}_q)$ over $\mathbb{F}_q$. Then the family of all $r$-dimensional projective subspaces of $\mathbb{P}^{2r}(\mathbb{F}_q)$ gives us an affirmative answer to Question \ref{qn:threshold} for all $n = (q^{2r+1}-1)/(q-1)$ with $q$ a prime power.
\item Next, fix  $r \in \mathbb{N}$, let $q$ be a prime power, and consider the $(2r)$-dimensional dual affine space $\mathbb{DA}^{2r}(\mathbb{F}_q)$ over $\mathbb{F}_q$. Then the family of all $r$-flats of $\mathbb{DA}^{2r}(\mathbb{F}_q)$ gives us an affirmative answer to Question \ref{qn:threshold} for all $n = q(q^{2r}-1)/(q-1)$ with $q$ a prime power.
\end{enumerate}

For completeness, let us also record the following fact.
\begin{enumerate}[label = {\bfseries{G\arabic{enumi}}}, resume]
\item The observation of Banakh and Gavrylkiv~\citep{banakh} mentioned earlier shows that $g(n) = (1+o(1))\sqrt{n}$ whenever $n = q^2 - 1$ for some prime power $q$, or $n = p^2 - p$ for some prime $p$. Consequently, we have an affirmative answer to Question~\ref{qn:threshold} for any $n \in \N$ of the aforementioned forms.
\end{enumerate}

Finally, we demonstrate using a tensor product construction that $\mathcal{S}$ is closed under taking pointwise products. For a set $x \subset [n]$, we define its \emph{characteristic vector} $\chi_x \in \{0,1\}^n$ by $(\chi_x)_i = 1$ if $i \in x$ and $(\chi_x)_i=0$ otherwise. Given two sets $x\subset[n]$ and $y \subset[m]$, we define their \emph{tensor product $x \otimes y$} to be the subset of $[nm]$ whose characteristic vector $\chi_{x \otimes y}$ is given by 
\[(\chi_{x \otimes y})_{(i-1)m + j} = (\chi_x)_{i}(\chi_y)_j\]
for all $i \in [n]$ and $j \in [m]$. For two families $\A \subset \mathcal{P}_{n}$ and $\B \subset \mathcal{P}_{m}$, we define their tensor product by
\[\A \otimes \B = \{x \otimes y : x\in \A, y \in \B\};\]
note that $\A \otimes \B \subset \mathcal{P}_{nm}$ and that $|\mathcal{A} \otimes \mathcal{B}| = |\mathcal{A}||\mathcal{B}|$. Now observe that if $\mathcal{A} \subset [n]^{(k)}$ and $\B \subset [m]^{(l)}$, then $\A \otimes \B \subset [nm]^{(kl)}$, and furthermore, if $\A$ and $\B$ are symmetric and intersecting, then so is $\A \otimes \B$. It follows that
\[s(nm,kl) \geq s(n,k) s(m,l)\]
for all  $k,l,m,n \in \mathbb{N}$, and in particular, if $(n,k),(m,l) \in \mathcal{S}$, then $(nm,kl) \in \mathcal{S}$. 

\begin{enumerate}[label = {\bfseries{G\arabic{enumi}}}, resume]
\item The above observation implies that $g(\cdot)$ is submultiplicative, i.e., we have \[g(nm) \leq g(n) g(m)\] for all $n,m \in \mathbb{N}$. This fact may be used to answer Question~\ref{qn:threshold} affirmatively for some additional sequences of positive integers; for example, we conlcude that the answer to Question~\ref{qn:threshold} is in the affirmative for all $n = (q_1^2+q_1+1)(q_2^2+q_2+1)$ with $q_1$ and $q_2$ both prime powers, and so on.
\end{enumerate}

\section{Conclusion}\label{s:conc}
A number of interesting open problems remain. Theorem~\ref{thm:main} and Lemma~\ref{l:lbgen} together determine the order of magnitude of $\log ({n \choose k}/s(n,k))$ when $k/n$ is bounded away from zero by a positive constant. The gap between our upper and lower bounds for $s(n,k)$ is somewhat worse for smaller $k$, and it would be of interest to improve Theorem~\ref{thm:main} in the regime where $k=o(n)$.

Determining $s(n,k)$ precisely for all $k\le n/2$ would appear to be a challenging problem. We conjecture that for any $\delta>0$, if $n$ is sufficiently large depending on $\delta$ and $(1+\delta)\sqrt{n} \log n \leq k \leq n/2$, then \[s(n,k) = |\F(n,k)|.\] 
Note that if $n$ is sufficiently large depending on $\delta$ and $(1+\delta)\sqrt{n} \log n \leq k \leq n/2$, then the family $\F(n,k)$ yields a larger symmetric intersecting family than any of the algebraic constructions in Section~\ref{s:smallk}.

Determining the asymptotic behaviour of $g(n)$ is another problem that merits further investigation. We have established various estimates in Section~\ref{s:smallk}, but even the fundamental question of deciding whether $g(n)/\sqrt{n}$ converges in the limit as $n \to \infty$ still remains open.

\section*{Acknowledgements}
The second author wishes to acknowledge support from ERC Advanced Grant 320924, NSF grant DMS-1300120 and BSF grant 2014290, and the third author was partially supported by NSF Grant DMS-1800521. We would like to thank two anonymous referees for their careful reading of the paper. Finally, we would like to thank Nathan Keller and Omri Marcus for pointing out an error in a previous version of the paper; this led to an adjustment of the statement of Theorem \ref{thm:main}.

\bibliographystyle{amsplain}
\bibliography{sym_uni_fam}

\end{document}